\newtheorem{teo}{Theorem}
\newtheorem*{teo*}{Theorem}
\newtheorem{lemma}[teo]{Lemma}
\newtheorem{conjecture}[teo]{Conjecture}
\newtheorem{prop}[teo]{Proposition}
\theoremstyle{remark}
\newtheorem{rmk}[teo]{Remark}
\theoremstyle{definition}
\theoremstyle{definition}
\newtheorem{mydef}[teo]{Definition}
\newtheorem{esempio1}[teo]{Example}
\begin{document}
\baselineskip 12pt
\ytableausetup
{mathmode, boxsize=1em}
\begin{center}
\textbf{\Large On chromatic symmetric homology\\and planarity of graphs}
\vspace{5mm}
 \\Azzurra Ciliberti\footnote{La Sapienza Università di Roma - azzurra.ciliberti@uniroma1.it} and Luca Moci\footnote{Università di Bologna - luca.moci2@unibo.it} 
 
 \end{center}
 
 \bigskip

\begin{abstract}
  \noindent In  \cite{1} the authors defined a categorification of Stanley's chromatic symmetric function called the chromatic symmetric homology, given by a suitable family of representations of the symmetric group. In this paper we prove that, as conjectured in \cite{2}, if a graph $G$ is non-planar, then its chromatic symmetric homology in bidegree (1,0) contains $\mathbb{Z}_2$-torsion. Our proof follows a recursive argument based on Kuratowsky's theorem.
\end{abstract}

\section{Introduction}
The \emph{chromatic symmetric function} of a graph, defined by Stanley in \cite{3}, is a remarkable combinatorial invariant which refines the chromatic polynomial. Recenty, in \cite{1}, Sazdanovic and Yip categorified this invariant by defining a new homological theory, called the \emph{chromatic symmetric homology} of a graph $G$. This construction, inspired by Khovanov's categorification of the Jones polynomial \cite{5}, is obtained by assigning a graded representation of the symmetric group to every subgraph of $G$, and a differential to every cover relation in the Boolean poset of subgraphs of $G$. The chromatic symmetric homology is then defined as the homology of this chain complex; its bigraded Frobenius series $Frob_G(q,t)$, when evaluated at $q = t = 1$, reduces
to Stanley's chromatic symmetric function expressed in the Schur basis.

As proved in \cite{2}, this categorification produces a truly stronger invariant: in other words, chromatic symmetric homology can distinguish couples of graphs that have the same chromatic symmetric function. Furthermore, in the same paper, the properties of chromatic symmetric homology with integer coefficients have been investigated. The authors of \cite{2} provided examples of graphs whose chromatic symmetric homology has torsion, leaving open the following conjecture:
\begin{conjecture}
A graph $G$ is non-planar if and only if its chromatic symmetric homology in bidegree (1,0) contains $\mathbb{Z}_2$-torsion.
\end{conjecture}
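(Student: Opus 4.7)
The plan is to prove the biconditional by treating each direction separately, as they require genuinely different techniques.

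For the forward direction (non-planar implies $\mathbb{Z}_2$-torsion), the approach is a reduction via Kuratowski's theorem. First one directly verifies by computation that both $K_5$ and $K_{3,3}$ have $\mathbb{Z}_2$-torsion in their chromatic symmetric homology in bidegree $(1,0)$; these computations are essentially carried out in \cite{2}. Since every non-planar graph $G$ contains a subgraph homeomorphic to $K_5$ or $K_{3,3}$, it then suffices to establish an inductive lemma: the $\mathbb{Z}_2$-torsion in bidegree $(1,0)$ is preserved under both (a) adding edges and (b) subdividing edges. The natural tool is a long exact sequence of chain complexes associated with each operation, restricted to the bidegree $(1,0)$ piece. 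The delicate point is ensuring that the $2$-torsion class is not killed by the boundary map induced by the new edge or subdivision vertex.

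For the converse direction (planar implies no $\mathbb{Z}_2$-torsion), a more structural strategy is needed. One natural approach is induction on the number of edges: add one edge at a time while maintaining planarity, and verify that no new $\mathbb{Z}_2$-torsion is created. This reduces to analyzing the connecting homomorphism in the long exact sequence for edge addition and showing that it cannot produce $2$-torsion in bidegree $(1,0)$ under the planarity hypothesis. Alternatively, one could try to exhibit an explicit $\mathbb{Z}$-basis for the free part of the bidegree $(1,0)$ homology of any planar graph using the face structure of a planar embedding and Euler's formula, thereby witnessing freeness directly.

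The main obstacle lies in the converse direction. The forward direction benefits from the concrete structure of $K_5$, $K_{3,3}$, and the universality of Kuratowski's theorem, whereas the converse demands a global argument valid for every planar graph. Controlling the interplay between the symmetric group representations appearing in bidegree $(1,0)$ and a planar embedding appears to require tools beyond those used for the forward implication. Given the emphasis of the abstract, it is quite plausible that only the forward direction is fully tackled in the present paper, while the converse is established elsewhere (perhaps via explicit computations for planar graphs in \cite{2}) or remains open.
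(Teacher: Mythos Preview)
Your high-level reading is accurate: the paper does not prove the full biconditional. The statement is presented as a conjecture, and only the forward implication (non-planar $\Rightarrow$ $\mathbb{Z}_2$-torsion) is established; the converse remains open, exactly as you anticipated in your final paragraph.

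For the forward direction, your outline---Kuratowski reduction to $K_5$ and $K_{3,3}$, base cases from \cite{2}, then inheritance under subdivision and under passage to a supergraph---matches the paper's architecture precisely. The difference lies in how the inheritance step is carried out. You propose a long exact sequence argument and correctly flag the obstacle: one must show the torsion class is not killed by the connecting map, and you do not indicate how to do this. The paper sidesteps this issue entirely. It works only in the Specht isotypic component of shape $\lambda=(2^k,1^{n-2k})$ and observes that appending a box labelled $n+1$ to the bottom of every tableau indexing the generators embeds the relevant piece of $C_*(G)$ into $C_*(G')$ compatibly with the differentials, since the Garnir operators $\pi_{i,j}$ never touch the bottom row. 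The explicit torsion witness $(g,h)$ with $d_2(g)=2h$, $d_1(h)=0$, $h\notin\mathrm{im}\,d_2$ is transported verbatim; for subdivision, when the broken edge appears in a tableau, the $\pi_{i,j}$ relations rewrite $h$ and $g$ as chains supported on edges of $G'$. Showing $h\notin\mathrm{im}\,d_2^{G'}$ is then direct: any preimage would involve the new columns of $d_2^{G'}$, but $h$ is supported on the old edges. Your LES approach would be more conceptual if the survival of the class could be established, but the paper's explicit element-tracking avoids analyzing any connecting homomorphism at all.
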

In this paper we prove one direction of this conjecture, namely:

\begin{teo*}
Let $G$ be a finite non-planar graph. Then its chromatic symmetric homology in bidegree (1,0) contains $\mathbb{Z}_2$-torsion.
\end{teo*}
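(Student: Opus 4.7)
The plan is to leverage Kuratowski's theorem, which asserts that a finite graph $G$ is non-planar if and only if it contains, as a subgraph, a subdivision of either $K_5$ or $K_{3,3}$. The strategy then splits into two pieces: (i) a direct verification that the two Kuratowski graphs $K_5$ and $K_{3,3}$ themselves have $\mathbb{Z}_2$-torsion in bidegree $(1,0)$, and (ii) a set of monotonicity lemmas showing that this torsion survives the graph operations needed to build any non-planar $G$ out of a Kuratowski subgraph.

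For the base step, I would compute the bidegree $(1,0)$ part of the Sazdanovic--Yip chain complex explicitly for $K_5$ and $K_{3,3}$. Fixing the bigrading isolates a relatively small piece of the full Boolean-lattice-indexed complex, so the problem reduces to finite linear algebra over $\mathbb{Z}$: one identifies a cycle $z$ in the $(1,0)$-strand and certifies, either by a Smith normal form computation or by exhibiting an explicit chain $w$ with $\partial w = 2z$ while checking that $z$ itself is not a boundary, that the torsion subgroup contains a $\mathbb{Z}_2$ summand. Such torsion has already been observed experimentally for these graphs in \cite{2}; the work here is to isolate the contribution coming specifically from bidegree $(1,0)$ and to produce a sufficiently canonical representative $z$ that we can track inductively.

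For the inductive steps I would develop (or invoke) a deletion--contraction-type long exact sequence for chromatic symmetric homology relating $G$, $G-e$ and $G/e$, restricted to bidegree $(1,0)$. Using this, the goal is to establish three propagation statements: \emph{(a)} subdividing an edge preserves $\mathbb{Z}_2$-torsion in bidegree $(1,0)$, allowing passage from $K_5$ or $K_{3,3}$ to any of their subdivisions $H$; \emph{(b)} adding an edge between existing vertices preserves the torsion; and \emph{(c)} adding a new vertex with any set of incident edges preserves the torsion. Iterating \emph{(a)}--\emph{(c)} starting from $K_5$ or $K_{3,3}$ reaches any non-planar $G$ by Kuratowski's theorem.

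The main obstacle I expect lies in step \emph{(b)}: adding an edge. The long exact sequence a priori allows a torsion class in bidegree $(1,0)$ either to become divisible in the larger graph or to be killed by a new boundary coming from the inserted edge. To rule this out, one must pin down a canonical representative of the $\mathbb{Z}_2$-torsion class inherited from the Kuratowski subgraph and prove that its image under the connecting homomorphism vanishes at bidegree $(1,0)$ for every possible choice of inserted edge. If this uniformity can be established, the recursion closes and Kuratowski's theorem delivers the statement.
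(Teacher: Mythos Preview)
Your overall architecture matches the paper's: invoke Kuratowski, verify the base cases $K_5$ and $K_{3,3}$ by explicit computation (these are the two lemmas quoted from \cite{2}), and then propagate the torsion through edge subdivision and subgraph inclusion. The divergence is entirely in how the propagation is carried out, and this is where your proposal has a genuine gap.

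The paper does \emph{not} use a long exact sequence. Instead it exploits a structural feature already visible in the base computations: the $\mathbb{Z}_2$-torsion generator $h$, together with the witness $g$ satisfying $d_2(g)=2h$, lives in the $\mathbf{S}_\lambda$-isotypic component for a partition of the special two-column shape $\lambda=(2^k,1^{n-2k})$. When passing from $G$ to a larger graph $G'$ (either by subdividing an edge or by taking $G$ as a subgraph of $G'$), one works in $\mathbf{S}_{\lambda'}$ with $\lambda'=(2^k,1^{n'-2k})$, i.e.\ $\lambda$ with extra boxes appended at the bottom of the first column. The numberings indexing the relevant Specht summands in $C_1(G)$ and $C_2(G)$ are promoted to numberings for $G'$ simply by writing the new vertex labels into those bottom boxes; since the straightening operators $\pi_{i,j}$ that control the differentials never touch these appended rows, the relations $d_1(h)=0$ and $d_2(g)=2h$ transfer verbatim to $G'$, and a short direct argument then shows $h\notin\operatorname{im} d_2^{G'}$. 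No connecting homomorphism appears anywhere, so the obstacle you correctly flag in step~(b)---that a torsion class might be killed or become divisible across a long exact sequence---simply never arises. Your proposal provides no mechanism to rule out that obstruction uniformly over all possible inserted edges, and a deletion--contraction sequence by itself will not supply one. The paper's key insight, that the torsion sits in a two-column Specht piece whose combinatorics is insensitive to appending vertices, is precisely the ingredient your plan is missing.
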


Our strategy is based on applying Kuratowsky's theorem: we show that the torsion elements in the homology of the complete graph $K_5$ and of the complete bipartite graph $K_{3,3}$  are mapped to torsion elements in the homology of the graphs that are obtained from them by the operations of edge subdivision and graph inclusion, i.e. all the non-planar graphs. 

\section{Computing q-degree zero homology} \label{form}
The interested reader can find a complete description of the construction of chromatic symmetric homology for a graph in the paper \cite{1}. Here we limit ourselves to briefly recall how to compute homology in $q$-degree zero.
For simplifying the notation, we will denote by $C_{i}$ the $i$-th chain module and by $H_{i}$ the $i$-th homology module. They correspond respectively to $C_{i,0}$ and $H_{i,0}$ in the notation of \cite{1}.

Let $G$ be a graph with $n$ vertices and $m$ edges. We can assume without loss of generality that $G$ is simple: indeed if $G$ has a loop, then its chromatic symmetric homology is zero (Proposition 3.1 of \cite{1}), while if $G$ has two vertices connected by multiple edges, then we can replace them by a single edge without affecting the chromatic symmetric homology (Proposition 3.2 of \cite{1}). Hence we denote by $(i,j)$ the edge incident to the vertices $i$ and $j$, and we order the set of edges $E(G)$ lexicographically. 

Each subset $S$ of $E(G)$ is naturally identified with a subgraph of $G$, having the same vertices as $G$ and $S$ as set of edges. The authors of \cite{2,4} call it a "spanning subgraph", while we will call it simply a \emph{subgraph}, because the word "spanning" is sometimes used with a different meaning in graph theory and matroid theory. The set of all subgraphs $G$ has a stucture of Boolean lattice $\bold{B}(G)$, ordered by reverse inclusion. In the Hasse diagram of $\bold{B}(G)$, we direct an edge $\epsilon (F,F')$ from a subgraph $F$ to a  subgraph $F'$ if and only if $F'$ can be obtained by removing an edge $e$ from $F$. The sign of $\epsilon$, $sgn(\epsilon)$, is defined as $(-1)^k$, where $k$ is the number of edges of $F$ less than $e$.

Let $F \subseteq E(G)$ a subgraph of $G$ with
connected components $B_1,\dots,B_r$. Then the module associated to it in $q$-degree zero is the $permutation$ $module$
\begin{center}
    $M_F = Ind_{\frak S_{B_1} \times \dots \times \frak S_{B_r}}^{\frak S_n}(\bold{S}_{(b_1)} \otimes \dots \otimes \bold{S}_{(b_r)}  )$,
\end{center}
where $\frak S_n$ is the permutation group on $n$ elements and $\bold{S}_{(i)}$ is the Specht module related to the partition $(i)$.

We define 
\begin{center}
    $C_i(G)= \displaystyle\bigoplus_{|F|=i} M_F$,
\end{center}
where the sum is over the subgraphs of $G$ with $i$ edges. Therefore the $i$-th chain module $C_i(G)$ of the graph is a direct sum of
${m}\choose{i}$
permutation modules of
$\frak S_n$. If $\lambda$ is the partition whose parts are the sizes of the connected components of $F$, then
$$M_F\cong M_{\lambda}=\mathbb{C}[\frak S_n]\otimes_{\mathbb{C}[S_\lambda]}\bold{S}_{(n)}.$$

Let $F$ and $F'$ be subgraphs of $G$ where $F'=F-e$. There is an $edge$ $map$ $d_{\epsilon(F,F')}:M_F \to M_{F'}$, defined in our case as the inclusion (for the general definition see \cite{1}).

Finally, the $i$-$th$ $chain$ $map$ $d_i : C_i(G) \to C_{i-1}(G)$ is defined as
\begin{center}
    $d_i=\displaystyle\sum_\epsilon sgn(\epsilon) d_\epsilon$,
\end{center}
where the sum is over all the edges $\epsilon$ in $\bold{B}(G)$ which join a subgraph of $G$ with $i$ edges to a subgraph with $i-1$ edges. Sometimes we will use the notation $d_i^G$, where it may not be clear which graph we are referring to.

We need to recall the following definitions from \cite{2}.
\begin{mydef}
Let $F$ be a subgraph of $G$, and let $\lambda \vdash n$ be the partition whose
parts are the sizes of the connected components of $F$. The numbering $T(F)$ associated to
$F$ is the numbering of shape $\lambda$ such that each row consists of the elements in a connected
component of $F$ arranged in increasing order, and rows of $T(F)$ having the same size are
ordered so that the minimum element in each row is increasing down the first column.
\end{mydef}
Let $T = T(F)$. The $q$-$degree$ $zero$ $permutation$ $module$ $M_T$ associated to the numbering $T$ 
is cyclically generated by the Young symmetrizer $$a_T=\displaystyle\sum_{\rho \in R(T)}\rho$$ where $R(T)\leq \frak S_n$ is the subgroup of permutations that permute elements within each row of $T$. We have:
\begin{center}
    $M_F \cong  M_T = \mathbb{C}[\frak S_n] \cdot a_T$.
\end{center}

\begin{mydef}
For any numberings $S$ and $T$ of shape $\lambda$, let
\begin{center}
    $v_T^S= \sigma_{T,S}b_Ta_T=b_Sa_S\sigma_{T,S} \in M_T$,
\end{center}
where $a_T$ is as above, $$b_T=\displaystyle\sum_{\zeta \in C(T)} sgn(\zeta)\zeta,$$ $C(T)\leq \frak S_n$ is the subgroup of permutations that permute elements within each column of $T$, and $\sigma_{T,S}\in \frak S_n$ is such that $\sigma_{T,S}\cdot T = S$.
\end{mydef}
Moreover, the $q$-$degree$ $zero$ $Specht$ $module$ $\bold{S}_T$ associated to the numbering $T$ is cyclically generated by the Young symmetrizer $c_T=b_Ta_T$:
\begin{center}
    $\bold{S}_T=\mathbb{C}[\frak S_n]\cdot c_T$,
\end{center}
and $\bold{S}_T \cong \bold{S}_\lambda$.

We also recall the following result from \cite{4}, Section 7.2, Proposition 2.
\begin{prop}
 Let $F$ be a subgraph of $G$ with associated numbering $T$ of shape $\lambda$. Then
 \begin{center}
     $\bold{S}_T=span\{b_Sa_S\sigma_{T,S} | S \in SYT(\lambda)\} = span \{v_T^S | S \in SYT(\lambda)\}$,
 \end{center}
 where $SYT(\lambda)$ is the set of standard Young tableaux of shape $\lambda$.
\end{prop}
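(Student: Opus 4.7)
The plan is to recognize this proposition as a variant of the classical basis theorem for Specht modules, phrased in the right-module picture used in \cite{4}. First I would verify that the two expressions for $v_T^S$ coincide: since $\sigma_{T,S}\cdot T = S$, conjugation by $\sigma_{T,S}$ sends the row stabilizer $R(T)$ to $R(S)$ and the column stabilizer $C(T)$ to $C(S)$; summing over these subgroups yields $\sigma_{T,S}\, a_T = a_S\, \sigma_{T,S}$ and $\sigma_{T,S}\, b_T = b_S\, \sigma_{T,S}$, from which $\sigma_{T,S}\, b_T a_T = b_S a_S\, \sigma_{T,S}$. In particular each $v_T^S$ lies in $\mathbb{C}[\mathfrak{S}_n]\cdot c_T = \mathbf{S}_T$, so the inclusion $\mathrm{span}\{v_T^S\} \subseteq \mathbf{S}_T$ is immediate.

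For the reverse inclusion I would exploit the cyclicity of $\mathbf{S}_T$: it equals the span of $\{\sigma c_T : \sigma \in \mathfrak{S}_n\}$, and $\sigma c_T = v_T^{\sigma\cdot T}$ by the identity above. Thus $\mathbf{S}_T$ is already spanned by $\{v_T^U : U \text{ a numbering of shape } \lambda\}$, and the problem reduces to showing that the non-standard indices $U$ are redundant. This is the content of the Garnir relations: whenever $U$ fails to be standard along a pair of column-adjacent entries, one can rewrite $v_T^U$ as a signed linear combination of $v_T^{U'}$ indexed by numberings $U'$ that are strictly closer to being standard in the dominance order on row contents. Iterating, every $v_T^U$ becomes expressible as a linear combination of $v_T^S$ with $S \in SYT(\lambda)$, establishing the equality of spans.

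The main technical step is the Garnir reduction, which is combinatorially delicate but entirely standard; it appears in \cite{4} as well as in the treatment of polytabloids in classical references on the representation theory of $\mathfrak{S}_n$. The only extra bookkeeping needed is translating the usual left-module argument on polytabloids into the right action on $c_T$ that defines $\mathbf{S}_T$, and this is precisely what the conjugation identity in the first paragraph accomplishes. Once the span statement is proved, the linear independence of the $v_T^S$ (not asserted in the proposition, but a natural by-product) follows by comparing cardinalities with $\dim \mathbf{S}_\lambda = |SYT(\lambda)|$, so the $v_T^S$ in fact form a basis of $\mathbf{S}_T$.
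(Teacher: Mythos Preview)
The paper does not supply its own proof of this proposition: it is explicitly introduced with the phrase ``We also recall the following result from \cite{4}, Section 7.2, Proposition 2,'' and no argument is given. So there is no in-paper proof to compare against; the authors are simply quoting Fulton.

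Your sketch is a correct outline of the standard argument and is essentially the proof one finds in \cite{4}. The conjugation identity $\sigma_{T,S}\,b_T a_T = b_S a_S\,\sigma_{T,S}$ is exactly right and justifies both that the two descriptions of $v_T^S$ agree and that each $v_T^S$ lies in $\mathbf{S}_T$. Your reduction to spanning over all numberings $U$ via $\sigma c_T = v_T^{\sigma\cdot T}$, followed by a straightening/Garnir argument to eliminate non-standard $U$, is the classical route. One small remark: the Garnir relations you need here act on the index $S$ in $b_S a_S\,\sigma_{T,S}$ (equivalently, via left multiplication on $c_T$), so you should check once that the usual polytabloid Garnir identity transports to the Young-symmetrizer setting; this is routine because $a_S$ is invariant under the row group, but it is worth stating explicitly rather than leaving implicit in the phrase ``translating the usual left-module argument.'' The concluding dimension count is fine and, as you note, upgrades the spanning set to a basis.
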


\subsection{Computation of $H_1(G)$}\label{1.1}
We will describe the chromatic homology
in terms of Specht modules. Since each Specht module is cyclically generated, then our inclusion
maps are completely determined by specifying the image of a cyclic generator for each Specht
module. We now show how to achieve these computations systematically.\\
We restrict ourselves to Specht
modules of type $\lambda = (2^k, 1^{n-2k)}$ for $k\geq1$, so
we will be computing
\begin{center}
    $C_2(G)_{|_{S_\lambda}}\xrightarrow{d_2}C_1(G)_{|_{S_\lambda}}\xrightarrow{d_1}C_0(G)_{|_{S_\lambda}}\rightarrow 0.$
\end{center}
We order the edges of $G$ in lexicographic order and label these as $e_1,\dots, e_{m}$.
In homological degree zero, there is only one subgraph without edges. The chain
group $C_0(G) = M_{F_\emptyset}\cong M_{(1^n)}$ is the regular representation of $\frak S_n$, where $F_\emptyset$ is the edgeless
subgraph. By Corollary 1 in Section 7.3 of \cite{4}, the multiplicity of $S_\lambda$ in $\mathbb{C}[S_n]$ is the number $f^\lambda$ = $K_{\lambda,(1^n)}$ of standard Young
tableaux of shape $\lambda$. We list the tableaux $Y_1(G),\dots,Y_{f^\lambda}(G)\in SYT(\lambda)$ with respect to the following total
order:  if $T$ and $S$ are numberings of shape $\lambda$ such that the $i$-th row is the lowest row in which the numberings are different, the $j$-th column is the rightmost column in that row in which the numberings are different and $T(i,j)>S(i,j)$, then we say that $T>S$.\\
We have
\begin{center}
     $C_0(G)|_{S_\lambda} = \displaystyle\bigoplus_{i=1}^{f^\lambda}\mathbb{C}[\frak S_n]\cdot v_{Y_i}^{Y_1}$.
\end{center}

\begin{esempio1}
Let $G=K_5$. We order the edges of $G$ in lexicographic order; that is,
\begin{center}
    $(1,2),(1,3),(1,4),(1,5),(2,3),(2,4),(2,5),(3,4),(3,5),(4,5)$,
\end{center}
and label these as $e_1,\dots, e_{10}$. The standard Young tableaux of shape $\lambda = (2^2,1)$ listed with respect to the ordering defined earlier are
\begin{center}
$Y_1=$ \begin{ytableau}
    1 & 2 \\ 3 & 4 \\ 5
    \end{ytableau} $Y_2=$ \begin{ytableau}
    1 & 2 \\ 3 & 5 \\ 4
    \end{ytableau} $Y_3=$ \begin{ytableau}
    1 & 3 \\ 2 & 4 \\ 5
    \end{ytableau} $Y_4=$ \begin{ytableau}
    1 & 3 \\ 2 & 5 \\ 4
    \end{ytableau} $Y_5=$ \begin{ytableau}
    1 & 4 \\ 2 & 5 \\ 3
    \end{ytableau}
\end{center}

Then
\begin{center}
    $C_0(K_5)_{|_{\bold{S}_{(2^2,1)}}}= \displaystyle\bigoplus_{i=1}^5 (\mathbb{C}[S_5]\cdot v_{Y_i}^{Y_1}) \cong \bold{S}_{(2^2,1)}^{\oplus 5}$.
\end{center}
\end{esempio1}

In homological degree one, there are $m$ subgraphs with exactly one
edge, thus $$C_1(G) = \displaystyle\bigoplus_{i=1}^{m} M_{F_{e_i}}.$$
If $F_{e_ i}$ is the subgraph containing the edge $e_i = (p,q)$, then the permutation module $M_{F_{e_i}}$
has the associated numbering $T(F_{e_i})$ of shape
$\mu = (2, 1^{n - 2}),$
and $M_{F_{e_i}}= \mathbb{C} [\frak S_n]\cdot (e + (pq))\cong M_\mu$.\\ 
The multiplicity of $S_\lambda$ in $M_{F_{e_i}}$
is the number $K_{\lambda,\mu}$ of semistandard Young tableaux of
shape $\lambda$ and weight $\mu$. We next obtain numberings of shape $\lambda$ that will index these $K_{\lambda,\mu}$
Specht modules $\bold{S}_\lambda$, by standardizing the set $SSYT(\lambda, \mu)$ of semistandard Young tableaux of
shape $\lambda$ and weight $\mu$ with respect to $T(F_{e_i})$ in the following way. For any numbering $S$, the
word $w(S)$ of $S$ is obtained by reading the entries of the rows of $S$ from left to right, and
from the top row to the bottom row (note that this is not the usual definition of a reading
word for tableaux). So, given $Y \in SSYT(\lambda, \mu)$, let $w(Y ) = y_1,\dots,y_n$ be the word of $Y$, let $w(T) = t_1,\dots,t_n$ be the word of $T = T(F_{e_i})$ and let $\sigma$ be the permutation that orders $y_1,\dots,y_n$ without exchanging $y_i$ and $y_j$ if $y_i=y_j$. From this we obtain a numbering $X$ of
shape $\lambda$ by replacing the entry in $Y$ that corresponds to $y_k$ by $t_{\sigma(k)}$ . We list the numberings $X_i^1(G),\dots,X_i^{K_{\lambda,\mu}}(G)$
obtained using the procedure just described to $SSYT(\lambda,\mu)$ with respect to $T(F_{e_i} )$.
Observe that since $\mu = (2,1^{n-2})$ and $\lambda = (2^k, 1^{n-2k})$ where $k\geq 1$, then this procedure guarantees that the first row of each numbering $X_i^j(G)$ is 
\begin{ytableau}
p & q
\end{ytableau}
. So $v_{X_i^j(G)}^{Y_1} \in M_{F_{e_i}}$ and $\mathbb{C}[\frak S_n]\cdot v_{X_i^j(G)}^{Y_1} \cong$ $\bold{S}_\lambda$ for $j=1,\dots,K_{\lambda,\mu}$.
Thus
\begin{center}
    $C_1(G)_{|_{S_\lambda}} = \displaystyle\bigoplus_{i=1}^{m} \displaystyle\bigoplus_{j=1}^{K_{\lambda,\mu}} \mathbb{C}[S_n]\cdot v_{X_i^j(G)}^{Y_1}$
\end{center}
\begin{esempio1}
Let $G=K_5$. There are 10 subgraphs with exactly one edge. Furthermore, there are two semistandard Young tableaux of shape $\lambda=(2^2,1)$ and weight $(2,1^3)$,
\begin{center}
    $Z_1 = $\begin{ytableau}
    1 & 1 \\ 2 & 3 \\ 4
    \end{ytableau} \hspace{0.6cm}and \hspace{0.6cm} $Z_2 = $\begin{ytableau}
    1 & 1 \\ 2 & 4 \\ 3
    \end{ytableau} \hspace{0,1cm},
\end{center}
so the multiplicity of $\bold{S}_{(2^2,1)}$ in each $M_{F_{e_i}}\cong M_{(2,1^3)}$ is 2. Let $X_i^1(K_5)$
and $X_i^2(K_5)$ denote the numberings which index the two copies of $\bold{S}_{(2^2,1)}$ in each $M_{F_{e_i}}$, again listed with respect to the same ordering. So
\begin{center}
    $C_1(K_5)|_{\bold{S}_{(2^2,1)}}= \displaystyle\bigoplus_{i=1}^{10} (\mathbb{C}[S_5]\cdot v_{X_i^1(K_5)}^{Y_1} \oplus \mathbb{C}[S_5]\cdot v_{X_i^2(K_5)}^{Y_1}) \cong \bold{S}_{(2^2,1)}^{\oplus 20}$.
\end{center}
Consider for instance the subgraph $F_{e_1}$ of $K_5$ with the edge $e_1 = (1,2)$ only. The numbering associated to it is
\begin{center}
    $T(F_{e_1}) = \begin{ytableau}
    1 & 2 \\ 3 \\ 4 \\ 5
    \end{ytableau}$.
\end{center}
Let $z_1^i,\dots, z_5^i$ be the word of $Z_i$, $i=1,2$. The permutation that orders $z_1^1,\dots,z_5^1$ is the identity and the one that orders $z_1^2,\dots,z_5^2$ is $(45)$. Therefore we have 
\begin{center}
    $X_1^1 = \begin{ytableau}
    1 & 2 \\ 3 & 4 \\ 5
    \end{ytableau} = Y_1$\hspace{0.5cm} and \hspace{0.5cm}$X_1^2 = \begin{ytableau}
    1 & 2 \\ 3 & 5 \\ 4
    \end{ytableau} = Y_2$,
\end{center}
then
\begin{center}
    $v_{X_1^1}^{Y_1}=v_{Y_1}^{Y_1}$ \hspace{0.5cm}and\hspace{0.5cm} $v_{X_1^2}^{Y_1}=v_{Y_2}^{Y_1}$. 
\end{center}
\end{esempio1}
Lastly, we consider the chain module in homological degree two. The subgraphs of $G$
with exactly two edges have connected components of partition type $(2^2, 1^{n-4})$ or $(3,1^{n-3})$.
We are only concerned with Specht modules of type $\lambda = (2^k, 1^{n-2k})$ with $k\geq2$ necessarily
and, since $\lambda \ntriangleright (3, 1^{n-3})$, then, by Corollary 1 in Section 7 of \cite{4}, $S_\lambda$ does not appear as a summand in a permutation module
isomorphic to $M_{(3,1^{n-3})}$. Hence, we only need to consider the subgraphs with
connected components of partition type $(2^2,1^{n-4})$.\\
So suppose $G$ has $h$ subgraphs whose connected components has partition type
$\nu = (2^2, 1^{n-4})$. List these subgraphs with respect to the lexicographic order of
their edge sets. Suppose $F_{e_i,e_j}$ is the subgraph that contains the edges
$e_i$ = $(p,q)$ and $e_j = (r,s)$ with $p < r$. The permutation module $M_{F_{e_i,e_j}}$
has the associated
numbering $T(F_{e_i,e_j})$ of shape $\nu$ and $$M_{F_{e_i,e_j}} = \mathbb{C}[\frak S_n]\cdot (e + (pq))(e + (rs))\cong M_\nu.$$

Similar to the previous case for $C_1(G)$, the multiplicity of $S_\lambda$ in $M_{F_{e_i,e_j}}$ is $K_{\lambda,\nu}$. We
list the numberings $W_{i,j}^1(G),\dots,W_{i,j}^{K_{\lambda,\nu}}(G)$ obtained using the procedure described above to $SSYT(\lambda,\nu)$ with respect to
$T(F_{e_i,e_j})$. The procedure guarantees that
the top two rows of each numbering $W$ are
\begin{ytableau}
p & q\\
r & s
\end{ytableau}
, so $v_W^{Y_1}\in M_{F_{e_i,e_j}}$
and $$\mathbb{C}[S_n]\cdot v_{W_{i,j}^l(G)}^{Y_1}\cong \bold{S}_\lambda\text{ for }l=1,\dots,K_{\lambda,\nu}.$$ Thus
\begin{center}
     $C_2(G)_{|_{S_\lambda}} = \displaystyle\bigoplus_{i,j} \displaystyle\bigoplus_{l=1}^{K_{\lambda,\nu}} \mathbb{C}[S_n]\cdot v_{W_{i,j}^l(G)}^{Y_1}$,
\end{center}
where the direct sum is over the values of $i$ and $j$ corresponding to the couples of
edges which form subgraphs of type $\nu$, i.e. the non-consecutive edges.
\begin{esempio1}
Let $G=K_5$. There are 15 subgraphs whose connected components have partition type $(2^2,1)$. There is only one semistandard Young tableau of shape $\lambda=(2^2,1)$ and weight $(2^2,1)$:
\begin{center}
    \begin{ytableau}
    1 & 1 \\ 2 & 2 \\ 3
    \end{ytableau}
\end{center}
so the multiplicity of $\bold{S}_{(2^2,1)}$ in each $M_{F_{e_i,e_j}}\cong M_{(2^2,1)}$ is 1, and we let $W_{i,j}$ denote the numbering which indexes the copy of $\bold{S}_{(2^2,1)}$ in $M_{F_{e_i,e_j}}$. Therefore,
\begin{center}
    $C_2(K_5)_{|_{\bold{S}_{(2^2,1)}}}= \displaystyle\bigoplus (\mathbb{C}[S_5]\cdot v_{W_{i,j}}^{Y_1}) \cong \bold{S}_{(2^2,1)}^{\oplus 15}$,
\end{center}
where the direct sum is over the values of $i$ and $j$ corresponding to the couples of non-consecutive edges.
\end{esempio1}

To compute the edge maps we will need the following theorem (Corollary 2.18 of \cite{2}).
\begin{teo}\label{pij}
For any numberings $S$ of shape $\lambda$,
\begin{center}
  $v_S^T = (-1)^j \displaystyle\sum_{U\in \Xi_{i,j}(S)} v_U^T$,  
\end{center}
where $\Xi_{i,j}(S)$ is the set of all numberings $U$ obtained from $S$ by exchanging the first $j$ entries in the $(i+1)$-th row of $S$ with $j$ entries in the $i$-th row of $S$, preserving the order of each subset of elements.

\end{teo}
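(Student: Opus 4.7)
My plan is to reformulate the identity as a Garnir-type annihilation in the group algebra $\mathbb{C}[\mathfrak{S}_n]$, establish it for $j=1$ via a direct Young-subgroup argument, and then induct on $j$.

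First, I would use the definition $v_S^T = b_T a_T \sigma_{S,T}$ to rewrite the identity as
\[
b_T a_T \Bigl(\sigma_{S,T} - (-1)^j \sum_{U \in \Xi_{i,j}(S)} \sigma_{U,T}\Bigr) = 0 \quad \text{in } \mathbb{C}[\mathfrak{S}_n].
\]
After right-cancelling $\sigma_{S,T}$ and reducing to the case $S=T$ by conjugation, this becomes $b_T a_T \cdot G_{i,j} = 0$, where $G_{i,j} := 1 - (-1)^j \sum_U \pi_U$ and $\pi_U$ is the permutation sending $T$ to $U$. With $S=T$, each $\pi_U$ is a product of $j$ disjoint transpositions $(a_{k_1} b_1)(a_{k_2} b_2)\cdots(a_{k_j} b_j)$, indexed by $j$-subsets $\{k_1<\cdots<k_j\}\subseteq\{1,\dots,\lambda_i\}$, where $a_m$ and $b_l$ denote the $m$-th entry of row $i$ and the $l$-th entry of row $i+1$ of $T$, respectively.

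Next, I would handle the base case $j=1$. In that case the elements $\{1,(a_1 b_1),\dots,(a_{\lambda_i} b_1)\}$ form a complete set of left coset representatives for $R(T)\cap H$ inside $H:=\mathfrak{S}_{\mathrm{row}\,i\,\cup\,\{b_1\}}$, so multiplying $G_{i,1}$ on the left by the row-$i$ factor of $a_T$ produces the full group-algebra sum $\Sigma_H := \sum_{\sigma\in H}\sigma$. The remaining row symmetrizers in $a_T$ commute with $\Sigma_H$, so it suffices to check $b_T\Sigma_H=0$. This follows by a pairing argument: the column transposition $\tau$ swapping the column-$1$ entries of rows $i$ and $i+1$ of $T$ lies in $C(T)\cap H$, and pairing each $\zeta\in C(T)$ with $\tau\zeta$ in the expansion $b_T=\sum_\zeta \operatorname{sgn}(\zeta)\zeta$ yields opposite-sign contributions whose products with the left $H$-invariant sum $\Sigma_H$ are equal, hence cancel.

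For the inductive step $j\geq 2$, I would decompose $\Xi_{i,j}(T)$ according to which entry $a_{k_j}$ of row $i$ gets paired with $b_j$. Applying the $(j-1)$-version of the identity to the ``inner'' swap of $\{b_1,\dots,b_{j-1}\}$ with a $(j-1)$-subset of $(\mathrm{row}\,i)\setminus\{a_{k_j}\}$, and then the $j=1$ case to the ``outer'' swap pairing $b_j$ with $a_{k_j}$, produces the $j$-version with sign $(-1)^{j-1}\cdot(-1)=(-1)^j$. The main obstacle will be the combinatorial bookkeeping in this nested recursion: I must verify that each $U\in\Xi_{i,j}(T)$ appears exactly once, that the order-preservation convention defining $\Xi_{i,j}$ is compatible with iterating single-entry swaps, and that the signs combine correctly. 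Once that bookkeeping is in place, the base case and the induction together yield the full identity.
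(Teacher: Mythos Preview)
The paper does not prove this theorem at all; it is imported verbatim as Corollary~2.18 of \cite{2}. So there is no in-paper argument to compare against, and your proposal must stand on its own.

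There is a concrete gap in your base case. After writing $a_T G_{i,1}=A''\,A_{i+1}\,\Sigma_H$ (with $A_r$ the row-$r$ symmetrizer, $A''=\prod_{r\neq i,i+1}A_r$, and $H=\mathfrak{S}_{\mathrm{row}\,i\cup\{b_1\}}$), you assert that ``the remaining row symmetrizers in $a_T$ commute with $\Sigma_H$'' and reduce to $b_T\Sigma_H=0$. This commutation fails for $A_{i+1}$ whenever $\lambda_{i+1}\geq 2$: the symmetrizers over two sets sharing a single element do \emph{not} commute in $\mathbb{C}[\mathfrak{S}_n]$. For instance with $\lambda=(2,2)$, $T=\begin{smallmatrix}1&2\\3&4\end{smallmatrix}$, one has $A_{i+1}=1+(34)$ and $\Sigma_H=\sum_{\sigma\in\mathfrak S_{\{1,2,3\}}}\sigma$; then $A_{i+1}\Sigma_H$ contains $(34)(13)=(134)$ while $\Sigma_H A_{i+1}$ contains $(13)(34)=(143)$, and these are not the same element. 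Hence knowing $b_T\Sigma_H=0$ does not immediately yield $b_T A'' A_{i+1}\Sigma_H=0$, and your reduction breaks down precisely in the cases the paper cares about (shapes $(2^k,1^{n-2k})$ with two-element rows). The pairing $\zeta\leftrightarrow\tau\zeta$ you describe proves $b_T\Sigma_H=0$ correctly, but that is not the quantity at hand.

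Your inductive step has a related issue. You propose to apply the $(j-1)$-identity to the ``inner'' swap of $\{b_1,\dots,b_{j-1}\}$ with a $(j-1)$-subset of $(\mathrm{row}\,i)\setminus\{a_{k_j}\}$, but $(\mathrm{row}\,i)\setminus\{a_{k_j}\}$ is not the $i$-th row of any numbering in play, so the inductive hypothesis (which concerns rows $i$ and $i+1$ of a numbering $S$) does not literally apply. Likewise, your ``outer'' $j=1$ swap pairs $b_j$ with an entry of row $i$, whereas the base case you established pairs $b_1$; bridging this requires the row-invariance $v_{S'}^T=v_S^T$ for $S'$ a row-permutation of $S$, which is true but needs to be invoked explicitly. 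Without these repairs the recursion does not close up, and the bookkeeping you flag as ``the main obstacle'' is in fact where the argument currently fails.
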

We let $\pi_{i,j}$ denote the operator on numberings such that
\begin{center}
    $\pi_{i,j}(S)=(-1)^j\displaystyle\sum_{U\in \Xi_{i,j}(S)}U$.
\end{center}

\begin{esempio1}
Let $G=K_5$. We compute $d_1(v_{X_3^1}^{Y_1})$ and $d_1(v_{X_3^2}^{Y_1})$, so we consider the subgraph $F_{e_3}$ of $K_5$ with the edge $e_3 = (1,4)$ only. The numbering associated to it is
\begin{center}
    $T(F_{e_3}) = \begin{ytableau}
    1 & 4 \\ 2 \\ 3 \\ 5
    \end{ytableau}$.
\end{center}
We have 
\begin{center}
    $X_3^1 = \begin{ytableau}
    1 & 4 \\ 2 & 3 \\ 5
    \end{ytableau}$\hspace{0.5cm} and \hspace{0.5cm}$X_3^2 = \begin{ytableau}
    1 & 4 \\ 2 & 5 \\ 3
    \end{ytableau} = Y_5$.
\end{center}
We compute 
\begin{center}
    $\pi_{1,1}(X_3^1)= - \begin{ytableau}
    1 & 2 \\ 4 & 3 \\ 5
    \end{ytableau} - \begin{ytableau}
    2 & 4 \\ 1 & 3 \\ 5
    \end{ytableau}$
\end{center}
and 
\begin{center}
    $\pi_{1,2} \begin{ytableau}
    2 & 4 \\ 1 & 3 \\ 5
    \end{ytableau} = \begin{ytableau}
    1 & 3 \\ 2 & 4 \\ 5
    \end{ytableau} = Y_3$.
\end{center}
Then, by Theorem \ref{pij}, we have
\begin{center}
    $v_{X_3^1}^{Y_1}= - v_{Y_1}^{Y_1} - v_{Y_3}^{Y_1}$ \hspace{0.5cm}and\hspace{0.5cm} $v_{X_3^2}^{Y_1}=v_{Y_5}^{Y_1}$,
\end{center}
and $d_1$ sends
\begin{center}
    $v_{X_3^1}^{Y_1} \mapsto - v_{Y_1}^{Y_1} - v_{Y_3}^{Y_1}$ \hspace{0.5cm}and\hspace{0.5cm} $v_{X_3^2}^{Y_1} \mapsto v_{Y_5}^{Y_1}$.
\end{center}
Now we compute $d_2(v_{W_{1,8}}^{Y_1})$, so we have to consider the subgraph $F_{e_1,e_8}$ of $K_5$ with the edges $e_1=(1,2)$ and $e_8=(3,4)$. The numbering associated to it is
\begin{center}
    \begin{ytableau}
    1 & 2 \\ 3 & 4 \\ 5
    \end{ytableau}.
\end{center}
There are two subgraphs of $K_5$ with only one edge that can be obtained removing one edge from $F_{e_1,e_8}$, i.e. $F_{e_1}$ and $F_{e_8}$. The per-edge map $d_{\epsilon(F_{e_1,e_8},F_{e_1})}$ appears as a summand in $d_2$ with a minus sign; instead $d_{\epsilon(F_{e_1,e_8},F_{e_8})}$ appears in $d_2$ with a plus sign. Therefore, $d_2$ sends
\begin{center}
    $v_{W_{1,8}}^{Y_1} \mapsto - v_{X_1^1}^{Y_1} + v_{X_8^1}^{Y_1}$.
\end{center}
\end{esempio1}

\section{The case of non-planar graphs} 
In this section we will prove that if $G$ is a non-planar graph, then the chromatic symmetric homology $H_1(G;\mathbb{Z})$ contains $\mathbb{Z}_2$-torsion. We first recall two results from \cite{2}:
\begin{lemma}\label{lemma1}
The chromatic symmetric homology $H_1(K_5;\mathbb{Z})$ contains $\mathbb{Z}_2$-torsion.
\end{lemma}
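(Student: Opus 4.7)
The plan is to restrict the entire chain complex to the isotypic component of the Specht module $\bold{S}_{(2^2,1)}$; since a $\mathbb{Z}_2$-summand in any single isotypic component forces $\mathbb{Z}_2$-torsion in $H_1(K_5;\mathbb{Z})$, this reduces the statement to a finite integer linear algebra problem. By the setup in Section \ref{1.1}, the restricted chain groups $C_0|_{\bold{S}_{(2^2,1)}}$, $C_1|_{\bold{S}_{(2^2,1)}}$ and $C_2|_{\bold{S}_{(2^2,1)}}$ are free abelian of ranks $5$, $20$ and $15$ respectively, with explicit cyclic generators $v_{Y_i}^{Y_1}$, $v_{X_k^j}^{Y_1}$ and $v_{W_{i,j}}^{Y_1}$ enumerated in the worked examples of Section \ref{form}.

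First I would list all generators explicitly: the five standard Young tableaux $Y_1,\dots,Y_5$ in Example 2; for each of the $10$ edges $e_k$ of $K_5$, the two numberings $X_k^1,X_k^2$ indexing the two copies of $\bold{S}_{(2^2,1)}$ inside $M_{F_{e_k}}$; and for each of the $15$ pairs of non-consecutive edges, the unique numbering $W_{i,j}$ indexing the copy of $\bold{S}_{(2^2,1)}$ inside $M_{F_{e_i,e_j}}$. I would then use Theorem \ref{pij}, iterating the operators $\pi_{i,j}$, to rewrite each of these generators as an integer linear combination of $v_{Y_1}^{Y_1},\dots,v_{Y_5}^{Y_1}$, exactly in the style of the last worked example, where $v_{X_3^1}^{Y_1}=-v_{Y_1}^{Y_1}-v_{Y_3}^{Y_1}$ and $v_{W_{1,8}}^{Y_1}\mapsto -v_{X_1^1}^{Y_1}+v_{X_8^1}^{Y_1}$. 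Combining these expansions with the signed inclusions defining $d_1$ and $d_2$ produces two integer matrices $D_1\in M_{5\times 20}(\mathbb{Z})$ and $D_2\in M_{20\times 15}(\mathbb{Z})$ with $D_1 D_2=0$.

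The final step is to compute the Smith normal form of $D_2$ viewed as a map from $\mathbb{Z}^{15}$ into $\ker D_1\subseteq\mathbb{Z}^{20}$: by standard homological algebra, the torsion part of $H_1=\ker D_1/\operatorname{im} D_2$ is read off from its elementary divisors greater than one, and the lemma reduces to exhibiting a $2$ among them. The main obstacle is not any one of these steps individually but the bookkeeping in the middle: writing out all $20$ elements $v_{X_k^j}^{Y_1}$ with the correct signs and tableau orderings, and then assembling the $20\times 15$ matrix $D_2$, is delicate and in practice is best verified by a computer algebra system. Once the Smith normal form is computed, a single elementary divisor equal to $2$ produces the required $\mathbb{Z}_2$-summand in $H_1(K_5;\mathbb{Z})$, which suffices to conclude.
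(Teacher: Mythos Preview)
Your proposal is correct and follows essentially the same approach as the paper: both restrict to the $\bold{S}_{(2^2,1)}$ isotypic component and reduce the question to integer linear algebra on the resulting $5$--$20$--$15$ chain complex. The only presentational difference is that the paper (quoting \cite{2}, Theorem~4.1) exhibits explicit witnesses $g\in C_2$ and $h\in C_1$ with $d_2(g)=2h$, $d_1(h)=0$ and $h\notin\mathrm{im}\,d_2$, whereas you propose to assemble the full matrices $D_1,D_2$ and read off a $2$ among the elementary divisors via Smith normal form; these are two ways of packaging the same computation.
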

\begin{proof}
We compute 
\begin{center}
     $C_2(K_5)_{|_{\bold{S}_{(2^2,1)}}}\xrightarrow{d_2}C_1(K_5)_{|_{\bold{S}_{(2^2,1)}}}\xrightarrow{d_1}C_0(K_5)_{|_{\bold{S}_{(2^2,1)}}}\rightarrow 0$,
\end{center}
restricted to the $\bold{S}_{(2^2,1)}$ modules.

Following the notation introduced in \ref{1.1}, let
$g = W_{1,8} + W_{1,9} + W_{1,10} + W_{2,6} - W_{2,7} - W_{2,10} + W_{3,5}
+ W_{3,7} + W_{3,9} + W_{4,5} + W_{4,6} + W_{4,8} - W_{5,10} - W_{6,9} + W_{7,8} \in C_2(K_5)$ and $h = X_9^1 + X_{10}^1 - X_2^1 + X_7^2 + X_9^2 \in C_1(K_5)$. We have that $h \notin \mathrm{im}\,d_2$, $d_2(g) = 2h$ and $d_1(h) = 0$, so $h$ generates $\mathbb{Z}_2$-torsion in $H_1(K_5;\mathbb{Z})$. For more details see \cite{2}, Theorem 4.1.

\end{proof}
\begin{lemma}\label{lemma2}
The chromatic symmetric homology $H_1(K_{3,3};\mathbb{Z})$ contains $\mathbb{Z}_2$-torsion.
\end{lemma}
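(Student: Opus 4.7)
The plan is to replicate, step for step, the argument used in Lemma \ref{lemma1}. Concretely, I would set up the chain complex
\begin{center}
$C_2(K_{3,3})_{|_{\bold{S}_\lambda}}\xrightarrow{d_2}C_1(K_{3,3})_{|_{\bold{S}_\lambda}}\xrightarrow{d_1}C_0(K_{3,3})_{|_{\bold{S}_\lambda}}\rightarrow 0$
\end{center}
restricted to a Specht module of type $\lambda=(2^k,1^{6-2k})$ for some $k\geq 1$. Guided by the $K_5$ case (where $\lambda=(2^2,1)$ carries the torsion), the natural first guess is $\lambda=(2^2,1^2)$, though I would not exclude $\lambda=(2^3)$ if the former does not work.

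Next I would make the bookkeeping explicit. Order the nine edges of $K_{3,3}$ lexicographically using the bipartition $\{1,2,3\}\sqcup\{4,5,6\}$; then $C_1(K_{3,3})|_{\bold{S}_\lambda}$ decomposes into copies of $\bold{S}_\lambda$ indexed by the numberings $X_i^j(K_{3,3})$ (with $i=1,\dots,9$ and $j=1,\dots,K_{\lambda,(2,1^4)}$), while $C_2(K_{3,3})|_{\bold{S}_\lambda}$ is indexed by pairs $(e_i,e_j)$ of non-adjacent edges together with the numberings $W_{i,j}^l(K_{3,3})$. Listing the standard Young tableaux $Y_1,\dots,Y_{f^\lambda}$ of shape $\lambda$ gives the basis of $C_0$. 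Using Theorem \ref{pij} iteratively on the operators $\pi_{i,j}$, I would tabulate $d_1(v_{X_i^j}^{Y_1})$ and $d_2(v_{W_{i,j}^l}^{Y_1})$ as $\mathbb{Z}$-linear combinations of the $v_{Y_k}^{Y_1}$'s and $v_{X_i^j}^{Y_1}$'s respectively.

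The heart of the proof is then to exhibit an explicit integer linear combination $h\in C_1(K_{3,3})|_{\bold{S}_\lambda}$ of the $v_{X_i^j}^{Y_1}$'s that is a cycle ($d_1(h)=0$) but is not in the image of $d_2$ over $\mathbb{Z}$, together with an element $g\in C_2(K_{3,3})|_{\bold{S}_\lambda}$ such that $d_2(g)=2h$. The existence of such a pair $(g,h)$ forces $[h]$ to be a nontrivial $2$-torsion class in $H_1(K_{3,3};\mathbb{Z})$. A promising heuristic is to mimic the $K_5$ candidate: build $g$ as a signed sum over the fifteen (or, here, all) pairs of non-adjacent edges arranged so that each $v_{X_i^j}^{Y_1}$ appearing in $d_2(g)$ has an even coefficient, exploiting the bipartite symmetry of $K_{3,3}$ to produce the needed cancellations.

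The main obstacle will be, precisely, guessing the combination $(g,h)$: unlike $K_5$, the graph $K_{3,3}$ has no triangles, so the subgraphs of type $(2^2,1^2)$ and their associated numberings behave differently, and the neat small support of $h$ used in Lemma \ref{lemma1} need not carry over. To verify that $h\notin\mathrm{im}\,d_2$ over $\mathbb{Z}$, I would compute the Smith normal form of the integer matrix of $d_2$ restricted to $\bold{S}_\lambda$ (reading off an invariant factor equal to $2$), which is a finite, if tedious, computation that can be confirmed by hand or by a short computer algebra check, as was done in \cite{2} for $K_5$.
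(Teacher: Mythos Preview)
Your plan is exactly the approach taken in the paper: restrict to $\bold{S}_{(2^2,1^2)}$ (your first guess is the correct one), set up the complex on the $X_i^j$'s and $W_{i,j}^l$'s, and exhibit explicit elements $g'\in C_2$ and $h'\in C_1$ with $d_1(h')=0$, $d_2(g')=2h'$, and $h'\notin\mathrm{im}\,d_2$. The paper (following \cite{2}, Theorem~4.2) simply records the witnesses $h'=X_6^3-X_7^3+X_8^3-X_9^2$ and an eighteen-term signed sum $g'$ over all pairs of non-adjacent edges of $K_{3,3}$, so once you carry out the bookkeeping you describe you will recover precisely this.
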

\begin{proof}
We compute 
\begin{center}
     $C_2(K_{3,3})|_{\bold{S}_{(2^2,1^2)}}\xrightarrow{d_2}C_1(K_{3,3})|_{\bold{S}_{(2^2,1^2)}}\xrightarrow{d_1}C_0(K_{3,3})|_{\bold{S}_{(2^2,1^2)}}\rightarrow 0$,
\end{center}
restricted to the $\bold{S}_{(2^2,1^2)}$ modules.

Following the notation introduced in Section \ref{1.1}, let
$g' = W_{1,6} - W_{1,7} + W_{1,8} + W_{1,9} - W_{2,4} - W_{2,5} + W_{2,7}
+ W_{2,9} + W_{3,4} - W_{3,5} + W_{3,6} + W_{3,8} + W_{4,8} + W_{4,9} + W_{5,6} + W_{5,7} - W_{6,9} + W_{7,8} \in C_2(K_{3,3})$ and $h' = X_6^3 - X_{7}^3 + X_8^3 - X_9^2 \in C_1(K_{3,3})$. We have that $h' \notin \mathrm{im}\,d_2$, $d_2(g') = 2h'$ and $d_1(h') = 0$, so $h'$ generates $\mathbb{Z}_2$-torsion in $H_1(K_{3,3};\mathbb{Z})$. For more details see \cite{2}, Theorem 4.2.

\end{proof}

\begin{prop}\label{p1}
 Let $G$ be a graph with $n$ vertices, $n \geq 2$, $\lambda$ a partition of $n$ of type $(2^k,1^{n-2k})$ and $h \in C_1(G)_{|\bold{S}_\lambda}$ a generator of $\mathbb{Z}_p$-torsion in $H_1(G;\mathbb{Z})$. Let $G'$ be a subdivision of $G$ with $n'$ vertices, $n'> n$, i.e. a graph obtained from $G$ by inserting $n' - n$ vertices into the edges of $G$. Then there exists $h' \in C_1(G')_{|\bold{S}_{\lambda'}}$, with $\lambda'=(2^k, 1^{n'-2k})$, that generates $\mathbb{Z}_p$-torsion in $H_1(G',\mathbb{Z})$. 
\end{prop}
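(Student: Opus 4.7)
I would prove the proposition by induction on $n' - n$, so it suffices to treat the inductive step where $G'$ is obtained from $G$ by subdividing a single edge $e = (p,q)$ by one new vertex $v$, labeled $n+1$. Then $e$ is replaced in $G'$ by the two edges $e' = (p,v)$ and $e'' = (q,v)$, and $\lambda' = (2^k, 1^{n+1-2k})$. Fix also $g \in C_2(G)_{|\bold{S}_\lambda}$ with $d_2^G(g) = p \cdot h$.

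The strategy is to construct an explicit $\mathbb{Z}$-linear chain map $\phi \colon C_\bullet(G)_{|\bold{S}_\lambda} \to C_\bullet(G')_{|\bold{S}_{\lambda'}}$ that lifts torsion classes. For a subgraph $F$ of $G$ with $e \notin F$, we regard $F$ as a subgraph of $G'$ by adding $v$ as an isolated vertex; then $\phi$ sends a numbering $X$ of shape $\lambda$ indexing a copy of $\bold{S}_\lambda$ in $M_F$ to the numbering $X^+$ of shape $\lambda'$ obtained by appending a singleton row containing $v$ at the bottom of the first column. A direct check using the standardization procedure of Section \ref{form} shows that $X^+$ indexes a copy of $\bold{S}_{\lambda'}$ in $M_{F \cup \{v\}}$. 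For subgraphs $F$ containing $e$, the lift $\phi$ on $M_F$ is defined as a signed combination of lifts into the permutation modules $M_{(F \setminus \{e\}) \cup \{e'\}}$ and $M_{(F \setminus \{e\}) \cup \{e''\}}$ in $G'$; the coefficients and signs are forced by the requirement that $\phi$ commute with all the edge maps $d_\epsilon$, taking into account the positions of $e'$ and $e''$ in the lexicographic order of $E(G')$.

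Setting $h' := \phi(h)$ and $g' := \phi(g)$, the chain-map property immediately yields $d_1^{G'}(h') = 0$ and $d_2^{G'}(g') = p \cdot h'$. To verify that $h' \notin \mathrm{im}\, d_2^{G'}$, I would construct a retraction $\psi \colon C_\bullet(G')_{|\bold{S}_{\lambda'}} \to C_\bullet(G)_{|\bold{S}_\lambda}$, namely a chain map satisfying $\psi \circ \phi = \mathrm{id}$, by projecting onto the summands spanned by numberings in which $v$ occupies the last singleton row (equivalently, by contracting the path $e'$-$e''$ back to $e$ and discarding the subgraphs of $G'$ containing only one of $e'$, $e''$). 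Then if $h' = d_2^{G'}(g'_0)$ for some $g'_0 \in C_2(G')_{|\bold{S}_{\lambda'}}$, applying $\psi$ would give $h = d_2^G(\psi(g'_0))$, contradicting the hypothesis that $h$ represents a nontrivial $\mathbb{Z}_p$-torsion class in $H_1(G;\mathbb{Z})$.

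The main technical obstacle lies in the case of subgraphs containing the subdivided edge $e$: correctly choosing the signs in $\phi$ so that the chain-map identity $\phi \circ d_i^G = d_i^{G'} \circ \phi$ holds on the nose, and ensuring that the compatibility $\psi \circ \phi = \mathrm{id}$ is preserved on the resulting signed combinations. These verifications require expanding each $v_S^T$ in the standard basis via the operators $\pi_{i,j}$ from Theorem \ref{pij} and carefully tracking the signs induced by the revised lexicographic ordering of $E(G')$.
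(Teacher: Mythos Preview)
Your outline diverges from the paper's argument in a key place. You route the subdivided edge $e=(p,q)$ through the two \emph{new} edges $e'=(p,v)$ and $e''=(q,v)$, building a chain map $\phi$ that replaces every occurrence of $e$ by a signed combination of $e'$ and $e''$, together with a retraction $\psi$ to pull non-triviality back. The paper never uses $e'$ or $e''$ for this. It simply appends a singleton box containing $n{+}1$ at the bottom of every tableau occurring in $h$ and in $g$; since the operators $\pi_{i,j}$ of Theorem~\ref{pij} do not touch that new bottom row, the relations $d_1(h)=0$ and $d_2(g)=p\,h$ persist verbatim. Any tableau whose first row is the deleted pair $\{p,q\}$ is then rewritten, via a Garnir relation $\pi_{1,1}$, as a combination of tableaux whose first rows are \emph{other} pairs that are still edges of $G'$ (as in the worked example with $K_5$). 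No abstract chain map or retraction is constructed; the non-triviality $h\notin\mathrm{im}\,d_2^{G'}$ is argued directly by observing that the new columns of $d_2^{G'}$ land in summands that $h$, being supported on the old edge set, does not meet.

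Your proposed retraction $\psi$ has a concrete problem as described. Take a two-edge subgraph $\{e',f\}$ of $G'$ with $f\in E(G)$ disjoint from $e'$; this contributes to $C_2(G')_{|\bold S_{\lambda'}}$, and your $\psi$ discards it, so $d_2^G\circ\psi=0$ there. But $d_2^{G'}$ sends the generator $v_W^{Y_1}$ of this summand (in which $v=n{+}1$ sits in one of the first two rows of $W$) into $M_{\{e'\}}\oplus M_{\{f\}}$, and the $M_{\{f\}}$-component, once expanded in the basis $\{v_{X_f^l(G')}^{Y_1}\}$ via Theorem~\ref{pij}, generically has nonzero coefficients on the tableaux with $v$ in the last singleton row. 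Hence $\psi\circ d_2^{G'}\neq 0$ on that summand, $\psi$ fails to be a chain map in degrees $2\to 1$, and the implication ``$h'\in\mathrm{im}\,d_2^{G'}\Rightarrow h\in\mathrm{im}\,d_2^{G}$'' is not justified. The same mismatch of first rows ($\{p,q\}$ versus $\{p,v\}$ or $\{q,v\}$) also makes the ``forced'' signs in your $\phi$ on $e$-containing subgraphs far from automatic: there is no evident module map from an $\bold S_\lambda$-summand of $M_{\{e\}}$ to $\bold S_{\lambda'}$-summands of $M_{\{e'\}}$ or $M_{\{e''\}}$ that is simultaneously compatible with all the surrounding edge maps, so what you flag as a routine sign check is in fact the heart of the matter.
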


\begin{proof}
it is enough to show that the statement holds for $n'=n+1$. 

Let $G$ be a graph with $n$ vertices and $G'$ be the graph obtained from $G$ by inserting a vertex into an edge of $G$. By hypothesis, there exists $h \in C_1(G)_{|\bold{S}_\lambda}$, with $\lambda = (2^k,1^{n-2k})$, that generates $\mathbb{Z}_p$-torsion in $H_1(G;\mathbb{Z})$.

We number the vertices of $G'$
from 1 to $n+1$, so that the vertex added is $n+1$.

We prove that $h$ is mapped in a $\mathbb{Z}_p$-torsion generator in $H_1(G';\mathbb{Z})$.

We have that all the edges of $G$ are also edges of $G'$, except for the edge that has been broken, let it be $(i,j)$, which is no longer an edge of $G'$, but it has been replaced by two edges, $(i,n+1)$ and $(n+1,j)$. We consider the partition $\lambda'=(2^k, 1^{n+1-2k})$, i.e. the partition $\lambda$ with an extra box at the bottom. We divide the standard Young tableaux of shape $\lambda'$ into two groups: those that are obtained simply by adding the box containing 
$n+1$ at the bottom of the standard Young tableaux of shape $\lambda$ and those that don't have $n+1$ in the last row. We do the same for the semistandard Young tableaux of shape $\lambda'$ and weight $(2, 1^{n-1})$. 
 If $i$ does not correspond to the two new edges, the $X_i^l(G')$'s are identical to the $X_i^{l}(G)$'s with the box containing $n+1$ added at the bottom. Therefore, the differential $d_1^{G'}$ acts on them in exactly the same way as $d_1^{G}$, since the $\pi_{i,j}$ operations don't concern the last row. Since $h\in \mathrm{ker}\, d_1^{G}$, we also have that $h\in \mathrm{ker}\,d_1^{G'}$.

We mean $h$ written as 1-chain in $G'$. This can always be done using the $\pi_{i,j}$ operations and Theorem \ref{pij}.

For example, consider $K_5$ and the graph $G_1$ obtained by adding the vertex 6 into the edge $(1,5)$. 
$X_4^1(K_5)$ with a box containing 6 added at the bottom, i.e.
\begin{ytableau}
1 & 5\\
2 & 3\\
4\\
6
\end{ytableau}, a priori is not a 1-chain in $G_1$ since the edge (1,5) is not an edge of $G_1$; but we have
\begin{center}
  \begin{ytableau}
1 & 5\\
2 & 3\\
4\\
6
\end{ytableau} = $\pi_{1,1} \begin{ytableau}
1 & 5\\
2 & 3\\
4\\
6
\end{ytableau} = -  \begin{ytableau}
2 & 5\\
1 & 3\\
4\\
6
\end{ytableau} -  \begin{ytableau}
1 & 2\\
3 & 5\\
4\\
6
\end{ytableau} = -X_2^2(G_1) - X_1^2(G_1) \in C_1(G_1)$.  
\end{center}
\begin{rmk}\label{rmk}
It cannot happen that there is cancellation among 1-cycles in $G'$ that give the cycle $h$ in $G$. In fact, the $\pi_{i,j}$ operations on numberings correspond, by Theorem \ref{pij}, to equalities among the elements of $\bold{S}_{(2^k,1^{n-2k})}$ indexed by such numberings. Therefore, if there was cancellation among 1-cycles in $G'$ that give $h$ in $G$, it would also be among the 1-cycles of $G$ that form $h$, so $h$ would be the trivial 1-cycle of $G$, which is not true.
\end{rmk}
A similar argument applies to the $W_{h,k}^s$'s. Since, in $G$, there exists a 2-cycle $g$ such that $d_2^{G}(g) = 2h$ and $g$ can be written as 2-chain in $G'$ with the $\pi_{i,j}$ operations, without becoming trivial as observed for $h$ in Remark \ref{rmk}, we also have that $d_2^{G'}(g) = 2h$.\\
It remains to prove that $h \notin \mathrm{im}\,d_2^{G'}$. If $h$ belonged to $\mathrm{im}\,d_2^{G'}$, since we know that $h \notin \mathrm{im}\,d_2^{G}$, it would be linear combination of the columns of $d_2^{G'}$ which are not in $d_2^{G}$, but it is not possible because $h$ is a 1-chain in $G$.\\
Therefore, we have a $\mathbb{Z}_p$-torsion generator in $H_1(G',\mathbb{Z})$.
\end{proof}

\begin{prop}\label{p2}
Let $G'$ be a graph with $n'$ vertices and let $G$ be a subgraph of $G'$ with $n$ vertices, $n \leq n'$. Assume that $h \in C_1(G)_{|\bold{S}_\lambda}$ is a generator of $\mathbb{Z}_p$-torsion in $H_1(G;\mathbb{Z})$. Then there exists $h' \in C_1(G')_{|\bold{S}_{\lambda'}}$, with $\lambda'=(2^k, 1^{n'-2k})$, that generates $\mathbb{Z}_p$-torsion in $H_1(G',\mathbb{Z})$.  
\end{prop}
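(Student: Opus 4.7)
The plan is to reduce the proposition to two elementary moves that, iterated, generate any subgraph inclusion $G \subseteq G'$: (a) adding an isolated vertex to $G$, and (b) adding a single edge whose endpoints are already vertices of $G$. Since any inclusion $G\subseteq G'$ differs by finitely many new vertices (adjoin them first as isolated points) followed by finitely many new edges, induction on $(n'-n)+|E(G')\setminus E(G)|$ reduces the statement to these two atomic steps.

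For case (a) I would mirror the argument used in Proposition \ref{p1}: define $h'$ and a witnessing 2-chain $g'$ by appending to every numbering appearing in $h$ (respectively in the $g$ produced by Lemma \ref{lemma1} or \ref{lemma2}) a single new row at the bottom containing the new vertex $n+1$. Because $\lambda=(2^k,1^{n-2k})$ and $\lambda'=(2^k,1^{n+1-2k})$ differ only in that trailing singleton, and because the $\pi_{i,j}$-operators of Theorem \ref{pij} that control $d_1$ and $d_2$ only exchange entries between the first two rows, the entire computation of the differentials is oblivious to the appended row. Hence $d_1^G(h)=0$ and $d_2^G(g)=2h$ transfer verbatim to $d_1^{G'}(h')=0$ and $d_2^{G'}(g')=2h'$. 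Moreover, since no new edges are introduced, $C_*(G')|_{\bold{S}_{\lambda'}}$ is canonically isomorphic to $C_*(G)|_{\bold{S}_\lambda}$ via the attachment of that extra row, so $h'\in\mathrm{im}\,d_2^{G'}$ would pull back to $h\in\mathrm{im}\,d_2^{G}$, contradicting the hypothesis.

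For case (b), write $G'=G\cup\{e\}$ with $e\in E(G')\setminus E(G)$ and simply take $h':=h$ and $g':=g$, which are literally chains of $G'$ because $E(G)\subseteq E(G')$. The restrictions of $d_1^{G'}$ and $d_2^{G'}$ to the subcomplex spanned by subgraphs of $G$ agree with $d_1^G$ and $d_2^G$, so $d_1^{G'}(h)=0$ and $d_2^{G'}(g)=2h$ are automatic. The crucial step, which I expect to be the main obstacle, is to show $h\notin\mathrm{im}\,d_2^{G'}$. Suppose for contradiction $h=d_2^{G'}(z)$ and decompose $z=z_0+z_1$ with $z_0\in C_2(G)$ and $z_1$ a combination of the new 2-chains $F_{e,e'}$ that involve $e$. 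Since $d_2^{G'}(z_0)=d_2^G(z_0)\in C_1(G)$, the equation forces $d_2^{G'}(z_1)\in C_1(G)$, so the component of $d_2^{G'}(z_1)$ sitting in $M_{F_e}$ (a summand absent from $C_1(G)$) must vanish.

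The technical heart is then to show that this vanishing forces $z_1=0$. Each summand $F_{e,e'}$ of $z_1$ contributes to the $M_{F_e}$-component via inclusion of its generator; writing $e=(p,q)$ and $e'=(r,s)$, this image is a scalar multiple of $(e+(pq))(e+(rs))\cdot a_{T(F_{e,e'})}$. I would argue that as $e'$ ranges over edges of $G$ non-adjacent to $e$, the resulting vectors are linearly independent in the $\bold{S}_\lambda$-isotypic part of $M_{F_e}$, because they are distinguished by the action of the different transpositions $(rs)$ on the common factor $(e+(pq))$ and this distinction survives the projection onto $\bold{S}_\lambda$ (a point that, as in Proposition \ref{p1}, will deserve a careful verification using Theorem \ref{pij}). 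Consequently all coefficients of $z_1$ must vanish, so $z=z_0\in C_2(G)$ and $h=d_2^G(z_0)$, contradicting the assumption that $h$ generates $\mathbb{Z}_p$-torsion in $H_1(G;\mathbb{Z})$. Iterating cases (a) and (b) finitely many times then yields the full proposition.
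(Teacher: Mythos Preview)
Your reduction to the two atomic moves and your treatment of case (a) are sound and parallel the paper's approach, which appends the $n'-n$ extra boxes all at once but uses exactly the same ``append a singleton row at the bottom'' mechanism and the observation that the $\pi_{i,j}$-relations never touch that row.

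The genuine gap is in case (b). The linear-independence claim you isolate as ``the technical heart'' is false already by a dimension count. For $\lambda=(2^2,1^{n-4})$ one has $K_{\lambda,\nu}=1$ (a single generator $W_{e,e'}$ per non-adjacent pair) while the multiplicity of $\mathbf{S}_\lambda$ in $M_{F_e}$ is $K_{\lambda,\mu}=n-3$; since the number of edges $e'$ non-adjacent to $e$ can exceed $n-3$, their images in the $\mathbf{S}_\lambda$-isotypic part of $M_{F_e}$ cannot be linearly independent. Concretely, take $G'=K_5$, $e=e_1=(1,2)$, and $e'\in\{e_8,e_9,e_{10}\}$. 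The three images of $v_{W_{1,8}},v_{W_{1,9}},v_{W_{1,10}}$ in $M_{F_{e_1}}|_{\mathbf{S}_{(2^2,1)}}$ are $X_1^1$, $X_1^2$, and $-X_1^1-X_1^2$ respectively (the last obtained from the numbering with rows $(1,2),(4,5),(3)$ via $\pi_{2,1}$). Hence $z_1=W_{1,8}+W_{1,9}+W_{1,10}$ has vanishing $M_{F_{e_1}}$-component, yet $d_2^{G'}(z_1)=X_8^1+X_9^1+X_{10}^1\neq 0$ lies entirely in $C_1(G)$ for $G=K_5\setminus e_1$. So the vanishing of the $M_{F_e}$-component does \emph{not} force $z_1=0$, and your argument stops.

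What is actually required is the weaker implication: if $d_2^{G'}(z_1)\in C_1(G)$ then $d_2^{G'}(z_1)\in\mathrm{im}\,d_2^{G}$ (so that $h=d_2^G(z_0)+d_2^{G'}(z_1)\in\mathrm{im}\,d_2^G$, a contradiction). The paper's proof asserts essentially this in one sentence (``it is not possible because $h$ is a $1$-chain in $G$'') without further detail; your transposition-distinguishing heuristic does not establish it, and neither does the stronger independence claim you aim for. A correct completion has to analyse the $C_1(G)$-part of $d_2^{G'}(z_1)$ directly and exhibit it as a boundary in $G$, rather than trying to show $z_1$ itself vanishes.
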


\begin{proof}
We have that all the edges of $G$ are also edges of $G'$. We consider the partition $\lambda'=(2^k, 1^{n'-2k})$, i.e. the partition $\lambda$ with $n'-n$ extra boxes at the bottom. We number the vertices of $G'$
from 1 to $n'$, so that the vertices eventually added are $n+1, \dots, n'$. We divide the standard Young tableaux of shape $\lambda'$ into two groups: those that are obtained simply by adding the boxes containing 
$n+1, \dots, n'$ at the bottom of the standard Young tableaux of shape $\lambda$ and those that don't have $n+1, \dots, n'$ in the last rows. We do the same for the semistandard Young tableaux of shape $\lambda'$ and weight $(2, 1^{n'-2})$. All the $X_i^{l}(G)$'s with extra boxes containing $n+1, \dots, n'$ at the bottom are among the $X_i^{l'}(G')$'s. Therefore, the differential $d_1^{G'}$ acts on them in exactly the same way as $d_1^{G}$, since the $\pi_{i,j}$ operations don't concern the last rows. Since $h\in \mathrm{ker}\, d_1^{G}$, we also have that $h\in \mathrm{ker}\,d_1^{G'}$.
 
 A similar argument applies to the $W_{h,k}^s$'s. Since, in $G$, there exists a 2-cycle $g$ such that $d_2^{G}(g) = 2h$, we also have that $d_2^{G'}(g) = 2h$.\\
It remains to prove that $h \notin \mathrm{im}\,d_2^{G'}$. If $h$ belonged to $\mathrm{im}\,d_2^{G'}$, since we know that $h \notin \mathrm{im}\,d_2^{G}$, it would be linear combination of the columns of $d_2^{G'}$ which are not in $d_2^{G}$, but it is not possible because $h$ is a 1-chain in $G$.\\
Therefore, we have a $\mathbb{Z}_p$-torsion generator in $H_1(G',\mathbb{Z})$.
\end{proof}

\begin{teo}\label{mail}
Let $G$ be a finite non-planar graph. Then $H_1(G;\mathbb{Z})$ contains $\mathbb{Z}_2$-torsion.
\end{teo}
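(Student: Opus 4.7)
The plan is to combine Kuratowsky's theorem with Lemmas \ref{lemma1} and \ref{lemma2} and with the transfer Propositions \ref{p1} and \ref{p2}. Kuratowsky's theorem states that every finite non-planar graph $G$ contains, as a subgraph, a graph $H$ which is a subdivision of either $K_5$ or $K_{3,3}$. So I split the argument into two parallel cases, according to which of the two Kuratowsky graphs $H$ is obtained from by subdivision.

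In the $K_5$ case, I start from the explicit $\mathbb{Z}_2$-torsion generator $h \in C_1(K_5)_{|\bold{S}_{(2^2,1)}}$ produced in the proof of Lemma \ref{lemma1}. Since $H$ is obtained from $K_5$ by a finite sequence of edge subdivisions, I apply Proposition \ref{p1} once for each inserted vertex, producing at each stage a $\mathbb{Z}_2$-torsion generator in $H_1$ of the intermediate subdivided graph, always in an $\bold{S}_{(2^2,1^\ast)}$-isotypic component (the first part $k=2$ of the shape is preserved by Proposition \ref{p1}; only the number of trailing $1$'s grows). After finitely many iterations I obtain a $\mathbb{Z}_2$-torsion generator $h_H \in H_1(H;\mathbb{Z})$. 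Since $H$ is a subgraph of $G$, a single application of Proposition \ref{p2} then transports $h_H$ to a $\mathbb{Z}_2$-torsion generator in $H_1(G;\mathbb{Z})$. The $K_{3,3}$ case runs identically, starting instead from the generator $h' \in C_1(K_{3,3})_{|\bold{S}_{(2^2,1^2)}}$ of Lemma \ref{lemma2}.

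The only delicate point, and the main thing to check, is that Propositions \ref{p1} and \ref{p2} can be chained cleanly: the Specht-module type of the torsion generator must remain of the form $(2^k,1^{n-2k})$ at every step, so that the hypothesis of the next application is met. This is automatic because both propositions preserve the value of $k$ and only enlarge the $1^{n-2k}$ tail, so the iteration of Proposition \ref{p1} followed by Proposition \ref{p2} is legal. Beyond this bookkeeping I expect no additional obstacle; in particular no new representation-theoretic identity beyond Theorem \ref{pij} should be required, since the work of lifting individual torsion generators across a subdivision or an inclusion has already been carried out in the proofs of Propositions \ref{p1} and \ref{p2}.
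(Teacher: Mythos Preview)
Your proposal is correct and follows essentially the same route as the paper: invoke Kuratowsky's theorem to find a subgraph $H$ that is a subdivision of $K_5$ or $K_{3,3}$, use Lemma~\ref{lemma1} or Lemma~\ref{lemma2} for the base case, apply Proposition~\ref{p1} to pass to the subdivision $H$, and then Proposition~\ref{p2} to pass to $G$. Your explicit iteration of Proposition~\ref{p1} one vertex at a time is exactly what the paper's proof of that proposition does (it reduces to the case $n'=n+1$), and your remark that the shape stays of the form $(2^k,1^{\ast})$ with $k$ fixed is the right bookkeeping to justify the chaining.
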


\begin{proof}
Since $G$ is non-planar, by Kuratowsky's theorem it 
contains a subgraph $G'$ which is a subdivision of
 $K_5$ or $K_{3,3}$. By Lemma \ref{lemma1} and Lemma \ref{lemma2}, both $H_1(K_5;\mathbb{Z})$ and $H_1(K_{3,3};\mathbb{Z})$ have a generator of $\mathbb{Z}_2$-torsion of type $(2^k,1^{n-2k})$. Hence, by Proposition \ref{p1}, also $H_1(G';\mathbb{Z})$ contains such a  $\mathbb{Z}_2$-torsion element; thus by Proposition \ref{p2} also $H_1(G;\mathbb{Z})$ does.  
\end{proof}

\printbibliography[heading=bibintoc]

@article {3,
    AUTHOR = {Stanley, Richard P.},
     TITLE = {A symmetric function generalization of the chromatic
              polynomial of a graph},
   JOURNAL = {Adv. Math.},
  FJOURNAL = {Advances in Mathematics},
    VOLUME = {111},
      YEAR = {1995},
    NUMBER = {1},
     PAGES = {166--194},
      ISSN = {0001-8708},
   MRCLASS = {05E05 (05C15)},
  MRNUMBER = {1317387},
MRREVIEWER = {SeungKyung Park},
       DOI = {10.1006/aima.1995.1020},
       URL = {https://doi.org/10.1006/aima.1995.1020},
}

@article {1,
    AUTHOR = {Sazdanovic, Radmila and Yip, Martha},
     TITLE = {A categorification of the chromatic symmetric function},
   JOURNAL = {J. Combin. Theory Ser. A},
  FJOURNAL = {Journal of Combinatorial Theory. Series A},
    VOLUME = {154},
      YEAR = {2018},
     PAGES = {218--246},
      ISSN = {0097-3165},
   MRCLASS = {05E05 (05C15 05C31)},
  MRNUMBER = {3718066},
MRREVIEWER = {Elizabeth M. Niese},
       DOI = {10.1016/j.jcta.2017.08.014},
       URL = {https://doi.org/10.1016/j.jcta.2017.08.014},
}

@article {2,
    AUTHOR = {Chandler, Alex and Sazdanovic, Radmila and Stella, Salvatore and Martha Yip},
     TITLE = {On the strength of chromatic symmetric homology for graphs},
JOURNAL = {ArXiv: 1911.13297v1},
      YEAR = {2019},
}

@book {4,
    AUTHOR = {Fulton, William},
     TITLE = {Young tableaux},
    SERIES = {London Mathematical Society Student Texts},
    VOLUME = {35},
      NOTE = {With applications to representation theory and geometry},
 PUBLISHER = {Cambridge University Press, Cambridge},
      YEAR = {1997},
     PAGES = {x+260},
      ISBN = {0-521-56144-2; 0-521-56724-6},
   MRCLASS = {05E10 (05E05 05E15 14M15 20G05)},
  MRNUMBER = {1464693},
MRREVIEWER = {Tadeusz J\'{o}zefiak},
}

@article {5,
    AUTHOR = {Bar-Natan, Dror},
     TITLE = {On {K}hovanov's categorification of the {J}ones polynomial},
   JOURNAL = {Algebr. Geom. Topol.},
  FJOURNAL = {Algebraic \& Geometric Topology},
    VOLUME = {2},
      YEAR = {2002},
     PAGES = {337--370},
      ISSN = {1472-2747},
   MRCLASS = {57M27},
  MRNUMBER = {1917056},
MRREVIEWER = {Jacob Andrew Rasmussen},
       DOI = {10.2140/agt.2002.2.337},
       URL = {https://doi.org/10.2140/agt.2002.2.337},
}





\end{document}